\documentclass[12pt]{amsart} 

\usepackage{amssymb,amsmath,amsthm}
\usepackage{amscd}
\usepackage{latexsym}
\usepackage{tikz}

\input xypic
\xyoption {all}

\usepackage{color}

\hyphenation{classes}
\lefthyphenmin=2  \righthyphenmin=3
\arraycolsep 2pt

\newtheorem{thm}{Theorem}[section]

\newtheorem{cor}[thm]{Corollary}

\newtheorem{defn}[thm]{Definition}
\newtheorem{rem}[thm]{Remark}

\numberwithin{equation}{section}

\newcommand{\al}{\alpha}
\newcommand{\be}{\beta}

\newcommand{\de}{\delta}
\newcommand{\e}{\varepsilon}

\newcommand{\la}{\lambda}

\newcommand{\Om}{\Omega}
\newcommand{\La}{\Lambda}

\newcommand{\R}{\mathbb{R}}
\newcommand{\N}{\mathbb{N}}
\newcommand{\ACq}{AC}
\newcommand{\AC}{AC^{(n)}(\Omega,\mathbb{R}^l)}
\newcommand{\ACH}{AC_H^{(n)}(\Omega,\mathbb{R}^l)}
\newcommand{\LL}{\mathcal{L}}
\newcommand{\po}{\text{-}}


\providecommand{\osc}{\mathop{\rm osc}\nolimits}




\newcommand{\lb}{\label}

\newcommand{\lra}{\longrightarrow}

\newcommand{\DEF}{\buildrel {\mbox{\tiny def}}\over =}

\begin{document}

\title{On the {B}ongiorno's
  notion of absolute continuity}



\author{Beata~Randrianantoanina}

\address{Department of Mathematics\\
Miami University\\
Oxford, OH 45056, USA}

\email{\texttt{randrib@miamioh.edu}}

\author{Huaqiang~Xu}

\address{Department of Mathematics\\
Miami University\\
Oxford, OH 45056, USA}

\email{\texttt{xuhuaqiang1990@gmail.com}}




\begin{abstract}
We  show that the  classes of  $\alpha$-absolutely continuous functions in the  sense of Bongiorno coincide for all $0<\alpha<1$. 
\end{abstract}

\maketitle

\section{Introduction}
\label{intro}

The classical Vitali's definition says that when $\Om\subseteq \R$, a function $f:\Om\lra \R$  is {\it absolutely continuous}  if for all $\e>0$, there exists $\de>0$ so that for every finite collection of disjoint intervals $\{[a_i,b_i]\}_{i=1}^k \subset \Om$ we  have (where $\LL^n$  denotes the Lebesgue measure on $\R^n$)
\begin{equation}\lb{hypp}
\sum_{i=1}^k \LL^1[a_i,b_i]<\de  \Rightarrow \sum_{i=1}^{k} |f(a_i)-f(b_i)|<\e.
\end{equation}

The study of the space of absolutely continuous functions on $[0,1]$ and their generalizations to domains in $\R^n$ is connected to the problem of finding regular subclasses of Sobolev spaces which goes back to Cesari and Calder{\'o}n \cite{C41,Cal51}. 
There are several natural ways of generalizing the definition of  absolute continuity for functions of several variables (cf. \cite{CA1933,Hobson,RR,AAK11,Maly1999,Hencl2002}).

Recently Bongiorno \cite{Bongiorno2005} introduced a generalization of absolute continuity, which is simultaneously similar to Arzel\`a's notion of bounded variation for functions on $\R^2$, cf. \cite{CA1933}, and to Mal\'y's notion of absolute continuity \cite{Maly1999}.

\begin{defn} 
\label{alphaac}(Bongiorno \cite{Bongiorno2005})
Let $0<\al<1$. A function $f:\Om\to\R^l$, where $\Om\subset\R^n$ is open, is said to be
\textit{$\al$-absolutely continuous} (denoted $\al\po\AC$ or   $\al\po\ACq$) if for all $ \varepsilon>0$, 
 there exists $\de>0$, such that for any finite collection of disjoint $\al$-regular intervals 
$\{ [\mathbf{a}_i,\mathbf{b}_i]\subset\Om \}_{i=1}^k$ 
we have 
\begin{equation}\lb{alac}
\sum_{i=1}^k \LL^n ([\mathbf{a}_i,\mathbf{b}_i])<\de \Rightarrow\sum_{i=1}^k|f(\mathbf{a}_i)-f(\mathbf{b}_i)|^n<\e.
\end{equation}
\end{defn}
Here, for $\mathbf{a}\in\R^l$, $|\mathbf{a}|$ denotes the Euclidean norm of $\mathbf{a}$, and we say that  an interval $[\mathbf{a},\mathbf{b}]\DEF \{\mathbf{x}=(x_\nu)_{\nu=1}^n\in\R^n\text{: }a_\nu\leq x_\nu\leq b_\nu,\nu=1,\dots,n\}$ is {\it $\al$-regular} if
$$
\frac{\LL^n([\mathbf{a},\mathbf{b}])}{(\max_\nu|a_\nu-b_\nu|)^n}\geq \al.
$$

The goal of this paper is to prove that the classes $\al\po AC$ coincide  for all  $\al\in(0,1)$. We show, however,   that the choice of $\de$ in \eqref{alac} cannot be made uniformly for all $\al\in(0,1)$.

Our method uses the class $1\po AC$ which was introduced in \cite{DRX}, see Definition~\ref{oneac} below.

\section{Preliminaries}
\label{prelres}

In 2002, Hencl \cite{Hencl2002} introduced the following class of absolutely continuous functions.

\begin{defn} \label{qac} (Hencl \cite{Hencl2002})
A function $f:\Om\to\R^l$ ($\Om\subset\R^n$ open)  is said to be in $\ACH$ (briefly $AC_H$) if there exists $\la\in(0,1)$  (equivalently, for all $\la\in(0,1)$) so that for all $\e>0$, there exists $\de>0$ so that for any finite collection of disjoint closed balls $\{B(\mathbf{x}_i,r_i) \subset\Om\}_{i=1}^k $  we have
$$\sum\limits_{i=1}^k \LL^n (B(\mathbf{x}_i,r_i))<\de\Rightarrow\sum\limits_{i=1}^k \osc^n(f,B(\mathbf{x}_i,\la r_i))<\e.$$
\end{defn}

Hencl  proved that  $AC_H\subset W^{1,n}_{loc}$ and that all functions in $AC_H$ are  differentiable a.e. and satisfy the Luzin (N) property and  the change of variables formula.

Bongiorno \cite{Bongiorno2009} introduced a modification of the notion of $\al$-absolute continuity in the spirit of Definition~\ref{qac}. To define it, we will use the following notation. 

Given interval $[\mathbf{x},\mathbf{y}]\subset \R^n$, we denote $|f([\mathbf{x},\mathbf{y}])|=|f(\mathbf{y})-f(\mathbf{x})|$, and for $0<\la<1$, we denote by $~^\la[\mathbf{x},\mathbf{y}]$ the interval with center $(\mathbf{x}+\mathbf{y})/2$ and sides of length $\la(y_\nu-x_\nu)$, $\nu=1,\dots,n$.

\begin{defn} \label{defalphahac} (Bongiorno \cite{Bongiorno2009})
 A function $f:\Om\to\R^l$ ($\Om\subset\R^n$ open)  is said to be in $\al\po\ACH$ (briefly $\al\po AC_H$) if there exists $\la\in(0,1)$ (equivalently, for all $\la\in(0,1)$) so that for all $\e>0$, there exists $\de>0$ such that for any finite collection of disjoint $\al$-regular intervals 
$\{ [\mathbf{a}_i,\mathbf{b}_i]\subset\Om \}_{i=1}^k$ 
we have 
\begin{equation}\lb{alhac}
\sum_{i=1}^k \LL^n ([\mathbf{a}_i,\mathbf{b}_i])<\de \Rightarrow\sum_{i=1}^k|f(^\la[\mathbf{a}_i,\mathbf{b}_i)])|^n<\e.
\end{equation}
\end{defn}

 Bongiorno \cite{Bongiorno2009}  proved that for all $0<\al<1$, the classes  $\al\po AC_H$ and $AC_H$ coincide.

\begin{thm}\label{bonach} ( \cite[Theorem~3]{Bongiorno2009})
For every $0<\al<1$,  $$\al\po\ACH= \ACH.$$
\end{thm}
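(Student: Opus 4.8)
The plan is to establish the two inclusions $AC_H\subseteq\al\po AC_H$ and $\al\po AC_H\subseteq AC_H$ for each fixed $\al\in(0,1)$, using throughout the ``for all $\la$'' freedom in Definitions~\ref{qac} and~\ref{defalphahac} together with the elementary fact that the sides of an $\al$-regular interval are comparable: writing $M=\max_\nu|a_\nu-b_\nu|$ and $m=\min_\nu|a_\nu-b_\nu|$, the chain $\al M^n\le\prod_\nu|a_\nu-b_\nu|\le mM^{n-1}$ forces $m\ge\al M$. Hence such an interval contains the ball $B(\mathbf c,m/2)$ about its center $\mathbf c$ and lies in $B(\mathbf c,\tfrac{\sqrt n}{2}M)$, with $\tfrac{\sqrt n}{2}M\le\tfrac{\sqrt n}{2\al}m$.

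For $AC_H\subseteq\al\po AC_H$, fix $f\in AC_H$ and verify Definition~\ref{defalphahac} for a single small $\la$. Given disjoint $\al$-regular intervals $[\mathbf a_i,\mathbf b_i]$, I would attach to each its inscribed ball $B_i=B(\mathbf c_i,m_i/2)$; these are pairwise disjoint and satisfy $\LL^n(B_i)\le\LL^n([\mathbf a_i,\mathbf b_i])$, so their measures still sum below the prescribed threshold. The two corners determining ${}^\la[\mathbf a_i,\mathbf b_i]$ lie within $\tfrac{\la\sqrt n}{2}M_i$ of $\mathbf c_i$, which is at most $\tfrac12\cdot\tfrac{m_i}{2}$ as soon as $\la\le\al/(2\sqrt n)$; thus $|f({}^\la[\mathbf a_i,\mathbf b_i])|\le\osc\!\big(f,B(\mathbf c_i,\tfrac12\cdot\tfrac{m_i}{2})\big)$, and summing $n$-th powers and invoking $AC_H$ with parameter $\tfrac12$ closes this direction.

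The substantial inclusion is $\al\po AC_H\subseteq AC_H$. Fix $f\in\al\po AC_H$ and verify Definition~\ref{qac} for a single small $\la$. Given disjoint balls $B(\mathbf x_i,r_i)$, I would pick points $\mathbf u_i,\mathbf v_i\in B(\mathbf x_i,\la r_i)$ nearly attaining $\osc(f,B(\mathbf x_i,\la r_i))$ and join them by a chain $\mathbf u_i=\mathbf p_{i,0},\dots,\mathbf p_{i,m_i}=\mathbf v_i$, $m_i\le m$, whose consecutive points $\mathbf p_{i,j-1},\mathbf p_{i,j}$ are opposite (main-diagonal) corners of $\al$-regular intervals. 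Dilating each of these intervals about its center by $1/\la'$, with $\la'$ close to $1$, yields $\al$-regular intervals $[\mathbf a_{ij},\mathbf b_{ij}]\subset B(\mathbf x_i,r_i)$ with $|f({}^{\la'}[\mathbf a_{ij},\mathbf b_{ij}])|=|f(\mathbf p_{i,j})-f(\mathbf p_{i,j-1})|$, so that $|f(\mathbf u_i)-f(\mathbf v_i)|\le\sum_j|f({}^{\la'}[\mathbf a_{ij},\mathbf b_{ij}])|$. Raising to the $n$-th power and using the convexity bound $(\sum_{j\le m_i}t_j)^n\le m^{n-1}\sum_j t_j^n$ gives $\osc^n(f,B(\mathbf x_i,\la r_i))\le m^{n-1}\sum_j|f({}^{\la'}[\mathbf a_{ij},\mathbf b_{ij}])|^n$ (with the oscillations approximated closely enough). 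Summing over $i$ and feeding the family $\{[\mathbf a_{ij},\mathbf b_{ij}]\}_{i,j}$—pairwise disjoint (across $i$ because the balls are, within each $i$ by construction) and of total measure at most $\sum_i\LL^n(B(\mathbf x_i,r_i))$—into the $\al\po AC_H$ condition with target $\e/m^{n-1}$ produces the required bound $\sum_i\osc^n(f,B(\mathbf x_i,\la r_i))<\e$.

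The crux, and the step I expect to be hardest, is the chaining lemma just invoked: any two points of $B(\mathbf x,\la r)$ must be joined by a number $m=m(n,\al,\la)$ of $\al$-regular intervals, \emph{bounded independently of the ball}, lying in $B(\mathbf x,r)$ and staying pairwise disjoint after the $1/\la'$ dilation. The difficulty concentrates at $\al$ near $1$, where $\al$-regularity forces near-cubes and each main-diagonal step points almost along $(1,\dots,1)$; a displacement $\mathbf v-\mathbf u$ transverse to this direction then cannot be produced by a few steps. I would instead realize it by a zigzag pairing slightly elongated ``ascending'' near-cubes with slightly elongated ``descending'' ones of complementary shape, so that the diagonal components cancel while the transverse components—of size only $O((1-\al)h)$ for a step of scale $h$—accumulate; one checks that $O\big((1-\al)^{-n/(n-1)}\big)$ steps of scale $h\asymp(1-\al)^{1/(n-1)}r$ both suffice and pack disjointly into $B(\mathbf x,r)$, and upgrades this to a uniform bound on $m$ by rescaling to a fixed reference ball and a compactness argument over the compact set of endpoint pairs. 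The latitude to take $\la$ small in Definition~\ref{qac} and $\la'$ near $1$ in Definition~\ref{defalphahac} furnishes exactly the room to house and separate the dilated intervals.
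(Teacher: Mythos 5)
First, a point of reference: the paper does not prove this statement at all --- Theorem~\ref{bonach} is imported as a black box from \cite[Theorem~3]{Bongiorno2009} and then used in the proof of Theorem~\ref{1capH}. So there is no in-paper argument to compare yours against, and I can only judge your proposal on its own terms. Your inclusion $\ACH\subseteq\al\po\ACH$ is correct and complete: the inscribed balls $B(\mathbf{c}_i,m_i/2)$ are pairwise disjoint, of smaller measure than the intervals, and since $m_i\ge\al M_i$ the corners of ${}^{\la}[\mathbf{a}_i,\mathbf{b}_i]$ fall inside $B\bigl(\mathbf{c}_i,\tfrac12\cdot\tfrac{m_i}{2}\bigr)$ once $\la\le\al/(2\sqrt n)$, so the $AC_H$ condition with parameter $\tfrac12$ does the job.

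The converse inclusion contains a genuine gap, located exactly where you flagged the difficulty, but it is worse than a delicate packing estimate: the disjointness your argument requires is structurally impossible for the chain as you build it. If ${}^{\la'}[\mathbf{a}_{ij},\mathbf{b}_{ij}]$ has $\mathbf{p}_{i,j-1}$ and $\mathbf{p}_{i,j}$ as its extreme corners, then for any $\la'<1$ the shared chain point $\mathbf{p}_{i,j}$ lies in the \emph{interior} of the dilated interval $[\mathbf{a}_{ij},\mathbf{b}_{ij}]$ and also in the interior of $[\mathbf{a}_{i,j+1},\mathbf{b}_{i,j+1}]$, so consecutive links overlap on an open set; taking $\la'$ closer to $1$ shrinks but never removes this overlap. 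A second, independent violation comes from the zigzag itself: each tooth occupies a region of diameter $\asymp h$ while advancing only $\asymp(1-\al)h$ transversally, so $\asymp(1-\al)^{-1}$ consecutive teeth sit essentially on top of one another, and the claim that they ``pack disjointly into $B(\mathbf{x},r)$'' fails for this arrangement. (Your instinct that many small steps are forced is right, though: a single admissible step $\mathbf{b}-\mathbf{a}$ has all components of one sign and lies in a cone of aperture $O(1-\al)$ about $\pm(1,\dots,1)$, so two steps realizing a transverse displacement $d$ would each need length $\asymp d/(1-\al)$ and could not fit in the ball when $\al$ is near $1$.) The argument is salvageable because all overlaps have multiplicity bounded by some $K=K(n,\al)$ --- and $\al$ is fixed in the theorem --- so one can split the chain into $K$ pairwise disjoint subfamilies and apply the $\al\po AC_H$ hypothesis $K$ times with target $\e/(Km^{n-1})$; but the definition quantifies over \emph{disjoint} families, your family is not one, and this repair step is missing. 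The closing ``compactness argument'' for the uniformity of $m$ is also unnecessary and not clearly valid as stated; a concrete construction yields $m=m(n,\al,\la)$ directly by rescaling.
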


\section{The main result}
\label{secalac}

In \cite{DRX} we introduced  an analog of Bongiorno's notion for $\al=1$, which will be an important tool for the main result of this paper.

\begin{defn} \label{oneac}
We say that a function $f:\Om \rightarrow \R^l$ ($\Om \subset \R^n$ open) is {\it $1$-absolutely continuous}, denoted $1\po\AC$  or $1\po\ACq$,  if  for every $ \e > 0$, there exists $\delta>0$, such that for any finite collection of disjoint $1$-regular intervals $\{[\mathbf{a}_i,\mathbf{b}_i] \subset\Om\}_{i=1}^k $ we have $$\sum_{i=1}^k \LL^n ([\mathbf{a}_i,\mathbf{b}_i])<\de\Rightarrow \sum_{i=1}^k|f(\mathbf{a}_i)-f(\mathbf{b}_i)|^n<\e.$$
\end{defn}

Note that  for all $\al<1$, every $\al$-regular  interval is also 1-regular. Thus  for all $\al<1$,
$$ \al\po AC\subseteq1\po AC.$$

Properties of the class $1\po AC$ were studied in \cite{DRX}, where it was shown that it  contains many pathological functions. In particular functions in $1\po AC$  don't  need to be continuous and even if they are differentiable a.e. they do not need to belong to the Sobolev space $W^{1,n}_{loc}$. However,  the class $1\po AC$ is very useful for characterizing the classes $\al\po AC$.


\begin{thm} \label{1capH}
For all $0<\al<1$ we have $$\al\po\AC=1\po\AC\cap \ACH.$$
\end{thm}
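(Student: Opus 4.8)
The plan is to prove the two inclusions separately, with the forward inclusion $\al\po\ACq\subseteq 1\po\ACq\cap\ACH$ being routine and the reverse one carrying all the difficulty. The containment $\al\po\ACq\subseteq 1\po\ACq$ is already recorded in the text, since every $1$-regular interval is a cube and hence $\al$-regular, so the defining condition for $\al\po\ACq$ applied to collections of cubes yields the one for $1\po\ACq$ with the same $\de$. For $\al\po\ACq\subseteq\ACH$ I would instead verify $f\in\al\po\ACH$ and then invoke Theorem~\ref{bonach} ($\al\po\ACH=\ACH$). Given disjoint $\al$-regular intervals $[\mathbf a_i,\mathbf b_i]$ with $\sum_i\LL^n([\mathbf a_i,\mathbf b_i])<\de$, the scaled intervals $^\la[\mathbf a_i,\mathbf b_i]$ are again pairwise disjoint (they sit inside the originals), are again $\al$-regular (scaling every side by $\la$ leaves the regularity ratio unchanged), lie in $\Om$, and satisfy $\sum_i\LL^n(^\la[\mathbf a_i,\mathbf b_i])=\la^n\sum_i\LL^n([\mathbf a_i,\mathbf b_i])<\de$. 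Applying the $\al\po\ACq$ condition to this scaled family gives $\sum_i|f(^\la[\mathbf a_i,\mathbf b_i])|^n<\e$, which is exactly the $\al\po\ACH$ condition.

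For the reverse inclusion $1\po\ACq\cap\ACH\subseteq\al\po\ACq$, the starting observation is that $\al$-regularity forces bounded eccentricity: writing $s_\nu=b_\nu-a_\nu$ and $s=\max_\nu s_\nu$, the inequality $\prod_\nu s_\nu\ge\al s^n$ together with $s_\nu\le s$ forces $\al s\le s_\nu\le s$ for every $\nu$. Thus each $\al$-regular interval is comparable to a cube, and I would bound the corner difference $|f(\mathbf a_i)-f(\mathbf b_i)|$ by a chain of increments of two types. Place a cube $Q_i^a$ of side $\tfrac12\min_\nu s_\nu$ at the corner $\mathbf a_i$ and a congruent cube $Q_i^b$ at $\mathbf b_i$; their inner corners $\mathbf q_i^a,\mathbf q_i^b$ lie strictly inside $[\mathbf a_i,\mathbf b_i]$, at distance at least $\tfrac12\min_\nu s_\nu$ from every face. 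Connect $\mathbf q_i^a$ to $\mathbf q_i^b$ along the segment joining them by a chain of at most $m=m(n,\al,\la)$ balls of radius $\tfrac14\min_\nu s_\nu$, each contained in $[\mathbf a_i,\mathbf b_i]$; bounded eccentricity makes the number of balls needed to traverse the segment bounded independently of $i$. The triangle inequality and the power-mean inequality then give
\[
|f(\mathbf a_i)-f(\mathbf b_i)|^n\le (m+2)^{n-1}\Big(|f(Q_i^a)|^n+|f(Q_i^b)|^n+\sum_{j}\osc^n\big(f,B(\mathbf c_{i,j},\la\rho_{i,j})\big)\Big),
\]
where $|f(Q_i^a)|$ denotes the corner-to-corner increment of $f$ over the cube $Q_i^a$.

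Summing over $i$, the cube terms are handled by $1\po\ACq$: the cubes $\{Q_i^a\}\cup\{Q_i^b\}$ are pairwise disjoint because they sit inside the disjoint intervals, and their total volume is at most $2\al^{-1}\sum_i\LL^n([\mathbf a_i,\mathbf b_i])<2\al^{-1}\de$. The oscillation terms are handled by $\ACH$: all connecting balls lie inside their intervals, hence in $\Om$, and are automatically disjoint across distinct $i$, while within a single interval their centers lie on a segment with spacing comparable to $\la\rho_{i,j}$, so the whole family has overlap bounded by a constant $N_0=N_0(n,\la)$. A greedy coloring splits the balls into $N_0$ pairwise disjoint subfamilies, and applying the $\ACH$ condition (with the fixed $\la$ afforded by the ``equivalently, for all $\la$'' clause of Definition~\ref{qac}) to each subfamily, whose total ball volume is again a constant multiple of $\sum_i\LL^n([\mathbf a_i,\mathbf b_i])$, bounds the oscillation sum. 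Choosing $\de$ small enough that each application returns a quantity below $\e/\big(2(m+2)^{n-1}\big)$, respectively $\e/\big(N_0(m+2)^{n-1}\big)$, yields $\sum_i|f(\mathbf a_i)-f(\mathbf b_i)|^n<\e$.

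The main obstacle is the reverse inclusion, and within it the bookkeeping that keeps every constant---the chain length $m$, the overlap bound $N_0$, and the volume-comparison factors---depending only on $n$, $\al$ and $\la$ and not on the particular family. The genuinely delicate point is geometric: one must place the corner cubes and the connecting balls so that they stay inside the intervals (hence inside $\Om$) and so that the connecting balls have uniformly bounded overlap, which is what allows the single $\ACH$ hypothesis to be applied after a bounded coloring. This is exactly the step where both hypotheses are used essentially, since $\ACH$ controls oscillation only over shrunken balls and cannot by itself reach the corners $\mathbf a_i,\mathbf b_i$, whereas $1\po\ACq$ supplies precisely the corner-to-corner control needed there.
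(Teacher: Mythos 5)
Your proof is correct, but it takes a genuinely different route from the paper's, most substantially on the hard inclusion. For $\al\po\ACq\subseteq\ACH$ the paper simply cites \cite[Theorem~7]{Bongiorno2005}, whereas you derive $\al\po\ACq\subseteq\al\po\ACH$ from the scale-invariance of $\al$-regularity and then invoke Theorem~\ref{bonach}; that is a clean, self-contained alternative. For the reverse inclusion the paper splits each $\al$-regular interval $[\mathbf a_i,\mathbf b_i]$ into exactly three pieces: two corner cubes $[\mathbf a_i,\mathbf c_i]$ and $[\mathbf d_i,\mathbf b_i]$ of side $\tfrac{\al}{4}\eta_i$, handled by $1\po\ACq$, and the middle increment $|f(\mathbf c_i)-f(\mathbf d_i)|$, handled by exhibiting $[\mathbf c_i,\mathbf d_i]={}^{\la}[\overline{\mathbf c_i},\overline{\mathbf d_i}]$ for a $\be$-regular interval $[\overline{\mathbf c_i},\overline{\mathbf d_i}]\subset[\mathbf a_i,\mathbf b_i]$ (with $\be=(\al/(2-\al))^n$, $\la=1-\al/2$) and applying the \emph{interval} form $\be\po\ACH$ of Hencl's condition, which $f\in\ACH$ satisfies by Theorem~\ref{bonach}. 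That gives a single application of each hypothesis and no covering bookkeeping. You instead work directly with Hencl's ball-based Definition~\ref{qac} and bridge the two inner corners by a chain of $m(n,\al,\la)$ balls, which forces the extra bounded-overlap and coloring step; in effect you re-prove inline the special case of the hard direction of Theorem~\ref{bonach} that the paper imports wholesale. Both arguments are sound: the paper's is shorter, while yours makes the reverse inclusion independent of the implication $\ACH\subseteq\be\po\ACH$. Two cosmetic points you should patch: corner cubes of side $\tfrac12\min_\nu s_\nu$ can touch at a single point when $[\mathbf a_i,\mathbf b_i]$ is itself a cube, so take a strictly smaller side (the paper uses $\tfrac{\al}{4}\eta_i$); and as written your final bound comes out to $\tfrac32\e$ rather than $\e$ (one application of $1\po\ACq$ at level $\e/(2(m+2)^{n-1})$ plus $N_0$ applications of $\ACH$ at level $\e/(N_0(m+2)^{n-1})$), which is fixed by starting from $\e/2$.
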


\begin{proof}
By definitions, $\al\po\AC \subset 1\po\AC$, and by  \cite[Theorem~7]{Bongiorno2005},  $\al\po\AC \subset \ACH$, which proves that  
$$\al\po\AC \subset 1\po\AC\cap \ACH.$$

For the other direction, let $f\in 1\po\AC\cap \ACH$ and $\e>0$. Let $\beta=(\frac{\al}{2-\al})^n$ and $\la=1-\frac{\al}{2}$. By Theorem~\ref{bonach}, $f\in\beta\po\ACH$ and  there exists $\de_1>0$ so that for each  finite family of disjoint $\beta$-regular intervals $\{[\mathbf{a}_i,\mathbf{b}_i]\subset \Om\}$ 
$$\sum_{i}^{} \LL^n([\mathbf{a}_i,\mathbf{b}_i])<\de_1\Rightarrow\sum_{i}^{}|f(^{\la}[\mathbf{a}_i,\mathbf{b}_i])|^n<\frac{\e}{3^{n+1}}.$$

Since $f\in 1\po\AC$, there exists $\de_2>0$ so that for each  finite family of disjoint 1-regular intervals $\{[\mathbf{a}_i,\mathbf{b}_i]\subset \Om\}$ 
$$\sum_{i}^{} \LL^n([\mathbf{a}_i,\mathbf{b}_i])<\de_2\Rightarrow\sum_{i}^{}|f([\mathbf{a}_i,\mathbf{b}_i])|^n<\frac{\e}{3^{n+1}}.$$

Let $\de=\min\{\de_1,\de_2\}$ and  $\{[\mathbf{a}_i,\mathbf{b}_i]\subset \Om\}$ be  a finite family of disjoint $\al$-regular intervals with $\sum_{i} \LL^n([\mathbf{a}_i,\mathbf{b}_i])<\de$.  We will show that $\sum_{i}|f([\mathbf{a}_i,\mathbf{b}_i])|^n<\e$.

For each $[\mathbf{a}_i,\mathbf{b}_i]$ from this family, let $\eta_i=\max\limits_{1\leq\nu\leq n}(b_{i\nu}-a_{i\nu})$.
Since  $[\mathbf{a}_i,\mathbf{b}_i]$ is an $\al$-regular interval, we get that
$$\prod_{j=1}^n\left(\frac{b_{ij}-a_{ij}}{\eta_i}\right)\ge \al.$$
Thus, for every $j=1,\dots,n$,
$$\frac{b_{ij}-a_{ij}}{\eta_i}\ge \al.$$

Let
$$\mathbf{c}_i=\left(a_{ij}+\frac{\al}{4}\eta_i\right)_{j=1}^n,\ \ \mathbf{d}_i=\left(b_{ij}-\frac{\al}{4}\eta_i\right)_{j=1}^n.$$
Then,  for every $j=1,\dots,n$, $c_{ij}<d_{ij}$,  and for every $i$
$$\max_j(d_{ij}-c_{ij})=\max_j\left(b_{ij}-a_{ij}-\frac\al2 \eta_i\right)=\left(1- \frac\al2\right) \eta_i.$$
Hence
\begin{equation*}
\begin{split}
\frac{\LL^n([\mathbf{c}_i,\mathbf{d}_i])}{\max_j(d_{ij}-c_{ij})^n}&=\prod_{j=1}^n\left(\frac{b_{ij}-a_{ij}-\frac\al2 \eta_i}{\left(1- \frac\al2\right)\eta_i}\right)\\
&\ge \prod_{j=1}^n\left(\frac{\al\eta_i-\frac\al2 \eta_i}{\left(1- \frac\al2\right)\eta_i}\right)=\left(\frac{\al}{2-\al}\right)^n=\be.
\end{split}
\end{equation*}

Thus interval $[\mathbf{c}_i,\mathbf{d}_i]$ is $\be$-regular.

Let
\begin{align*}
\overline{\mathbf{c}_i}&=\Big(\frac{a_{ij}+b_{ij}}{2}-\frac{1}{\la}\Big(\frac{a_{ij}+b_{ij}}{2}-a_{ij}-\frac{\al}{4}\eta_i\Big)\Big)_{j=1}^n\\&=\Big(\frac{c_{ij}+d_{ij}}{2}-\frac{1}{\la}\Big(\frac{d_{ij}-c_{ij}}{2}\Big)\Big)_{j=1}^n, \\
\overline{\mathbf{d}_i}&=\Big(\frac{a_{ij}+b_{ij}}{2}+\frac{1}{\la}\Big(b_{ij}-\frac{\al}{4}\eta_i-\frac{a_{ij}+b_{ij}}{2}\Big)\Big)_{j=1}^n\\
&=\Big(\frac{c_{ij}+d_{ij}}{2}+\frac{1}{\la}\Big(\frac{d_{ij}-c_{ij}}{2}\Big)\Big)_{j=1}^n.
\end{align*}

Then  $[\overline{\mathbf{c}_i},\overline{\mathbf{d}_i}]\subset [\mathbf{a}_i,\mathbf{b}_i]$,  and 
$$^{\la}[\overline{\mathbf{c}_i},\overline{\mathbf{d}_i}]=[\mathbf{c}_i,\mathbf{d}_i],$$
$$|f(\mathbf{a}_i,\mathbf{b}_i)|^n\leq 3^n\left[|f(\mathbf{a}_i,\mathbf{c}_i)|^n+|f(\mathbf{c}_i,\mathbf{d}_i)|^n+|f(\mathbf{d}_i,\mathbf{b}_i)|^n\right],$$
$$\sum_{i}^{}(\LL^n[\mathbf{a}_i,\mathbf{c}_i]+\LL^n[\mathbf{d}_i,\mathbf{b}_i])<\sum\limits_{i}^{}\LL^n[\mathbf{a}_i,\mathbf{b}_i]<\de,$$
$$\sum\limits_{i}\LL^n[\overline{\mathbf{c}_i},\overline{\mathbf{d}_i}]<\sum\limits_{i}^{}\LL^n[\mathbf{a}_i,\mathbf{b}_i]<\de.
$$

\begin{figure}
\begin{tikzpicture}[thick]
\draw (0,0) -- (0,5)-- (3,5) -- (3,0) -- cycle;
\coordinate [label=-135:$\mathbf{a}_i$] (a) at (0,0);
\coordinate [label=45:$\mathbf{b}_i$] (b) at (3,5);
\draw[loosely dotted] (0,1) -- (1,1)-- (1,0);
\draw[loosely dotted] (2,5) -- (2,4)-- (3,4);
\draw[loosely dotted] (1,1) -- (1,4)-- (2,4) --(2,1) -- cycle;
\coordinate [label=-45:$\mathbf{c}_i$] (c) at (1,1);
\coordinate [label=135:$\mathbf{d}_i$] (d) at (2,4);
\coordinate [label=-90:$\overline{\mathbf{c}}_i$] (c') at (0.75,0);
\coordinate [label=90:$\overline{\mathbf{d}}_i$] (d') at (2.25,5);
\draw (c')--(d');
\draw (c')--(0.75,5);
\draw (d')--(2.25,0);
\end{tikzpicture}\\
\end{figure}

Since the intervals $[\mathbf{a}_i,\mathbf{c}_i]$ and $[\mathbf{d}_i,\mathbf{b}_i]$ are  1-regular, and the intervals $[\overline{\mathbf{c}_{i}},\overline{\mathbf{d}_{i}}]$ are $\beta$-regular we get
\begin{align*}
\sum_{i}&|f(\mathbf{a}_i,\mathbf{b}_i)|^n \leq \\
&3^n\Big[\sum_{i}^{}|f([\mathbf{a}_i,\mathbf{c}_i])|^n+\sum_{i}^{}|f(^{\la}[\overline{\mathbf{c}_i},\overline{\mathbf{d}_i}])|^n+
 \sum\limits_{i}^{}|f([\mathbf{d}_i,\mathbf{b}_i])|^n\Big]\\
 &<\frac{\e}{3}+\frac{\e}{3}+\frac{\e}{3} = \e.
\end{align*}
\end{proof} 

As an immediate consequence of Theorem~\ref{1capH} we obtain the following corollary.

\begin{cor} \label{alphaeq}
For all $0<\al,\be<1$ and all $n,l\in \N$ we have $$\al\po\AC=\be\po\AC.$$

However the choice of $\de>0$ in \eqref{alac} cannot be made uniformly for all $\al\in(0,1)$.
\end{cor}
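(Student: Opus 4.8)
The plan is to treat the two assertions of Corollary~\ref{alphaeq} separately. The identity $\al\po\AC=\be\po\AC$ is an immediate consequence of Theorem~\ref{1capH}: that theorem gives $\g\po\AC=1\po\AC\cap\ACH$ for every $\g\in(0,1)$, and the right-hand side does not involve $\g$, so specializing to $\g=\al$ and $\g=\be$ yields the equality for all $n,l\in\N$. This part requires no work beyond invoking the main result.

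For the non-uniformity I would exhibit a single function and a single $\e>0$ for which no $\de$ can serve simultaneously for every $\al$. I take the coordinate projection $f(\mathbf{x})=x_1$ on the box $\Om=(-1,2)^n$ with $l=1$ (general $l$ follows by composing with $t\mapsto(t,0,\dots,0)$). Since $f$ is Lipschitz, estimating $\osc(f,B(\mathbf{x},\la r))$ by the diameter shows $f\in\ACH$, and since every $1$-regular interval is a cube of side $s$ with $|f(\mathbf{a})-f(\mathbf{b})|^n=s^n=\LL^n$ of the cube, one gets $f\in 1\po\AC$; hence $f\in\al\po\AC$ for every $\al$ by Theorem~\ref{1capH}. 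Fix $\e=\tfrac12$.

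The crux is the geometric observation that small $\al$ permits arbitrarily elongated $\al$-regular boxes. For $\al\in(0,1)$ set
$$
I_\al=[0,1]\times[0,\al^{1/(n-1)}]^{n-1}\subset\Om .
$$
Its side lengths are $1$ and (with multiplicity $n-1$) $\al^{1/(n-1)}<1$, so the largest is $1$ and $\LL^n(I_\al)=(\al^{1/(n-1)})^{n-1}=\al$; thus $\LL^n(I_\al)/(\max_\nu|a_\nu-b_\nu|)^n=\al$ and $I_\al$ is exactly $\al$-regular, while the oscillation of $f$ across $I_\al$ is $|f(\mathbf{0})-f(\mathbf{b})|=1$ with $\mathbf{b}$ the far corner. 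Hence the single-interval family $\{I_\al\}$ has total measure $\al$ yet $\sum|f|^n=1>\e$.

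To finish, I argue by contradiction: were some $\de>0$ uniform in $\al$ for this $\e$, then choosing any $\al\in(0,1)$ with $\al<\de$ makes $I_\al$ an $\al$-regular interval with $\LL^n(I_\al)=\al<\de$ but $\sum|f|^n=1>\e$, violating~\eqref{alac}. The step I expect to be most delicate is not the contradiction itself but isolating the right witness: one must recognize that the obstruction lives entirely in the degeneracy of $\al$-regularity as $\al\to0$ (the phenomenon is vacuous for $n=1$, where every interval is $1$-regular, so the construction is genuinely $n\geq2$), and that the exponent $\al^{1/(n-1)}$ is precisely what lets a fixed long side coexist with $n$-measure equal to $\al$.
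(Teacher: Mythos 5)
Your proof of the equality $\al\po\AC=\be\po\AC$ is exactly the paper's: both classes equal $1\po\AC\cap \ACH$ by Theorem~\ref{1capH}, so there is nothing to compare there. For the non-uniformity claim you take a genuinely different route. The paper argues abstractly: if some $f$ admitted a single $\de$ valid for every $\al\in(0,1)$ simultaneously, then, since every nontrivial interval is $\al$-regular for some $\al$, the implication \eqref{alac} would hold for arbitrary finite families of disjoint nontrivial intervals, and \cite[Theorem~3.1]{DRX} then forces $f$ to be constant. You instead produce an explicit witness: $f(\mathbf{x})=x_1$ lies in $1\po\AC\cap\ACH$ (your verifications --- $1$-regular intervals are cubes, and Lipschitz oscillation bounds for $AC_H$ --- are correct), hence in every $\al\po\AC$ by Theorem~\ref{1capH}, while the boxes $[0,1]\times[0,\al^{1/(n-1)}]^{n-1}$ are $\al$-regular with measure $\al\to 0$ and $|f(\mathbf{a})-f(\mathbf{b})|^n=1$, so no $\de$ can serve $\e=\tfrac12$ for all $\al$ at once. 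The trade-offs: the paper's argument gives the stronger conclusion that \emph{every} non-constant function fails to admit a uniform $\de$, but it leans entirely on the external result \cite[Theorem~3.1]{DRX}; yours is self-contained, elementary, and exhibits concretely where the degeneracy as $\al\to 0$ lives, but it certifies only one function and, as you correctly note, requires $n\geq 2$ --- for $n=1$ every nontrivial interval is $\al$-regular for every $\al$, so a uniform $\de$ exists trivially for any classically absolutely continuous function, and the second assertion of the corollary must be read as a statement about $n\geq 2$ (a caveat the paper's own proof also glosses over). Both arguments prove the statement as written.
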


\begin{proof} To prove the final statement, 
suppose that $f$ is a function so that  for all $ \varepsilon>0$, 
 there exists $\de>0$, such that for all $\al\in(0,1)$ and for any finite collection of disjoint $\al$-regular intervals 
$\{ [\mathbf{a}_i,\mathbf{b}_i]\subset\Om \}_{i=1}^k$,  the implication
\eqref{alac} holds. Note that every nontrivial interval in $\R^n$, i.e. an interval $[\mathbf{x},\mathbf{y}]$  such that $x_\nu <y_\nu$ for all $\nu=1,\dots,n$, is $\al$-regular for some $\al\in(0,1)$. Thus \eqref{alac} holds for any finite collection of nontrivial intervals. By \cite[Theorem~3.1]{DRX} this implies that $f$ is constant.
\end{proof}

\begin{rem} \lb{open}
In \cite{B2009top} Bongiorno  introduced another generalization of absolute continuity, a class $AC^n_\La(\Om,\R^m)$, briefly $AC_\La$, which strictly contains the class $AC_H$, and such that  all functions in $AC_\La$ are differentiable a.e. and satisfy the Luzin (N) property, but $AC^n_\La(\Om,\R^l)\not\subset W^{1,n}_{loc}(\Om,\R^l)$. 
It would be interesting to determine 
what are the  classes  $1\po AC \cap AC_\La$ and $1\po AC \cap AC_\La\cap W^{1,n}_{loc}$ (since, by \cite{DRX}, $1\po AC \not\subset W^{1,n}_{loc}$).
\end{rem}


\def\cprime{$'$}

\end{document}